\newtheorem{theorem}{Theorem}[section]
\newtheorem{lemma}[theorem]{Lemma}
\theoremstyle{definition}
\newtheorem{definition}[theorem]{Definition}
\theoremstyle{remark}
\newtheorem{remark}[theorem]{Remark}
\newcommand{\bsa}{\boldsymbol{a}}
\newcommand{\bsb}{\boldsymbol{b}}
\newcommand{\bsc}{\boldsymbol{c}}
\newcommand{\bsh}{\boldsymbol{h}}
\newcommand{\bsk}{\boldsymbol{k}}
\newcommand{\bsu}{\boldsymbol{u}}
\newcommand{\bsv}{\boldsymbol{v}}
\newcommand{\bsx}{\boldsymbol{x}}
\newcommand{\bsalpha}{\boldsymbol{\alpha}}
\newcommand{\bsbeta}{\boldsymbol{\beta}}
\newcommand{\bsgamma}{\boldsymbol{\gamma}}
\newcommand{\bszero}{\boldsymbol{0}}
\newcommand{\bN}{\mathbb{N}}
\newcommand{\bZ}{\mathbb{Z}}
\newcommand{\bQ}{\mathbb{Q}}
\newcommand{\bR}{\mathbb{R}}
\newcommand{\Rb}[4]{\phi_{#1}(#2,#3,#4)}
\newcommand{\Rc}[4]{\psi_{#1}(#2,#3,#4)}
\newcommand{\UPa}[3]{\tau_{#1}(#2,#3)}
\newcommand{\ave}[2]{\rho_{#1}(#2)}
\newcommand{\Generatorshort}[3]{\mathcal{P}_{#1}(#2,#3)}
\newcommand{\Aset}[1][i]{\mathcal{A}_{#1}}
\newcommand{\Bset}[1][i]{\mathcal{B}_{#1}}
\newcommand{\bbX}{\mathbb{X}}
\newcommand{\Boxineq}[4]{#2 \leq A_{#1} #3 \leq #4}
\newcommand{\bsalphas}[2]{\bsalpha_{#1, #2}}
\newcommand{\bsbetas}[2]{\bsbeta_{#1, #2}}
\newcommand{\bsgammas}[2]{\bsgamma_{#1, #2}}
\newcommand{\bsxs}[2]{\bsx_{#1, #2}}
\newcommand{\abs}[1]{\lvert #1 \rvert}
\DeclareMathOperator{\diag}{diag}
\begin{document}

\title{Enumeration of the Chebyshev-Frolov lattice points in axis-parallel boxes\thanks{
The research of the authors was supported under the Australian Research Councils Discovery Projects funding scheme (project number DP150101770).
}}
\date{\today}
\author{Kosuke Suzuki\thanks{
School of Mathematics and Statistics, The University of New South Wales, Sydney, NSW 2052, Australia,
e-mail: kosuke.suzuki1@unsw.edu.au
}, 
Takehito Yoshiki\thanks{
School of Mathematics and Statistics, The University of New South Wales, Sydney, NSW 2052, Australia,
e-mail: takehito.yoshiki1@unsw.edu.au}}


\maketitle

\begin{abstract}
For a positive integer $d$,
the $d$-dimensional Chebyshev-Frolov lattice
is the $\bZ$-lattice in $\bR^d$ generated by the Vandermonde matrix
associated to the roots of the $d$-dimensional Chebyshev polynomial.
It is important to enumerate the points from the Chebyshev-Frolov lattices in axis-parallel boxes
when $d = 2^n$ for a non-negative integer $n$,
since the points are used for the nodes of Frolov's cubature formula,
which achieves the optimal rate of convergence for many spaces of functions 
with bounded mixed derivatives and compact support.
The existing enumeration algorithm for such points by Kacwin, Oettershagen and Ullrich
is efficient up to dimension $d=16$.
In this paper we suggest a new enumeration algorithm of such points for $d=2^n$, efficient up to $d=32$.
\end{abstract}

\section{Introduction}
Let $d$ be a positive integer and $\bbX \subset \bR^d$ be a $d$-dimensional lattice,
i.e., there exists an invertible $d \times d$ matrix $T$ over $\bR$ such that
\[
\bbX=T(\bZ^d)=\{T\bsk \mid \bsk \in \bZ^d\}.
\]
The lattice $\bbX$ is said to be admissible if 
\[
\min \left\{ \prod_{i=1}^d |x_i|  \; \middle| \; (x_1, \dots, x_d)^{\top} \in \bbX \setminus \{\bszero\} \right\}> 0.
\]
Using an admissible lattice $\bbX=T(\bZ^d)$,
Frolov's cubature formula approximates the integration value
\[
I(f) := \int_{[-1/2, 1/2]^d} f(\bsx) \, d\bsx
\]
of a function $f \colon [-1/2, 1/2]^d \to \bR$ by
\begin{equation}\label{eq:Frolov-Formula}
Q_{a^{-1}T}(f)
= \abs{\det(a^{-1}T)} \sum_{x \in a^{-1}\bbX \cap [-1/2, 1/2]^d} f(\bsx) \quad \text{for $a\geq1$}.
\end{equation}
Thus the nodes are the shrunk lattice points $a^{-1}\bbX$
inside the box $[-1/2,1/2]^d$.
Frolov's cubature formula is first proposed by Frolov \cite{Frolov1976ube} and 
has been studied in many papers, see
\cite{Dubinin1991oqf, Dubinin1997cfb, Dung2016hca, Krieg2015uam, Skriganov1994cud, Temlyakov1993apf, Temlyakov2003cfd, Ullrich2016mcm, Ullrich2016ueb, Ullrich2016rfc}.
One prominent feature of the formula is
that it achieves the optimal rate of convergence for various spaces of functions
with bounded mixed derivatives and compact support.
This means that the approximation is automatically good,
even without knowing specific information of integrands.
The constraint of compact supportness can be removed using some modification, see \cite{Nguyen2015cvs}.

The implementation of Frolov's cubature formula
requires to enumerate the points in the set $a^{-1}\bbX \cap [-1/2, 1/2]^d$,
or equivalently, the points in the set $\bbX \cap [-a/2, a/2]^d$.
However the enumeration is a difficult task even in moderate dimensions.
Recently, 
an efficient enumeration algorithm 
for the so-called Chebyshev-Frolov lattices up to $d=16$
was suggested by Kacwin, Oettershagen and Ullrich \cite{Kacwin2016ocf}.
Since the lattices are admissible when $d=2^n$,
it is possible to implement Frolov's cubature formula for $d=2^n$, up to $d=16$.
Numerical experiments based on the algorithm are given
in \cite{Kacwin2016rfc} and will be given in the forthcoming paper
by the authors of \cite{Kacwin2016ocf} and Mario Ullrich.
Our contribution in this paper is to suggest a new efficient enumeration algorithm
for the Chebyshev-Frolov lattices for $d=2^n$.
It is efficient up to $d=32$.

The Chebyshev-Frolov lattices for $d=2^n$ are examples of admissible lattices, suggested by Temlyakov \cite[IV.4]{Temlyakov1993apf}.
Let $P_d$ be a rescaled $d$-dimensional Chebyshev polynomial defined as
\begin{equation}\label{eq:Chebyshev-poly}
P_d(x) = 2 \cos(d \arccos(x/2)) \quad \text{for $|x|<2$}.
\end{equation}
Its roots are given by
\begin{equation}\label{eq:root-Chebyshev-usual}
\zeta_{n,k} = 2 \cos \left(\frac{\pi (2k-1)}{2d} \right), \quad k = 1, \dots, 2^n.
\end{equation}
With these roots, we define a Vandermonde matrix $T$ by
\[
T = (\zeta_i^{j-1})_{i, j =1}^d.
\]
Now the $d$-dimensional Chebyshev-Frolov lattice is defined as the lattice $T(\bZ^d)$.
It is known that the lattice $T(\bZ^d)$ is admissible if  and only if $d=2^n$.
This is a special case of a general construction method for admissible lattices for any $d$
elaborated in \cite{Temlyakov1993apf}, see also Section~\ref{sec:admissible}.
An advantage of the Chebyshev-Frolov lattices is that the generating matrices are explicitly given.

We now briefly recall results in \cite{Kacwin2016ocf}.
The paper established an enumeration algorithm of the lattice points in $[-a/2, a/2]^d$,
for any orthogonal lattices.
This is applicable to the Chebyshev-Frolov lattices as they are orthogonal.
Their experiment shows that it is efficient up to $d=16$.
They further proved properties of the Chebyshev-Frolov lattices summarized as follows.
\begin{theorem}[{\cite[Theorem~1.1]{Kacwin2016ocf}}]\label{thm:Kacwin}
For any positive integer $d$,
the $d$-dimensional Chebyshev-Frolov lattice $T(\bZ^d)$ is orthogonal.
In particular,
there exists a lattice representation $\tilde{T} = TS$ with some $S \in SL_d(\bZ)$ such that
\begin{itemize}
\item For each component $t_{i,j}$ of $\tilde{T}$, it holds that $|t_{i,j}| \leq 2$.
\item $\tilde{T}^\top \tilde{T} = diag(d, 2d, \dots, 2d)$.
\end{itemize}
\end{theorem}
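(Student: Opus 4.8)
The approach I would take is to produce the unimodular matrix $S$ by a change of polynomial basis. The $j$-th column of $T$ lists the values of the monomial $x^{j-1}$ at the roots $\zeta_1,\dots,\zeta_d$ of $P_d$, and $T$ is invertible since these roots are pairwise distinct (the angles $(2k-1)\pi/(2d)$, $k=1,\dots,d$, are distinct in $(0,\pi)$). For $m\ge 1$ let $P_m(x)=2\cos(m\arccos(x/2))$ be the degree-$m$ rescaled Chebyshev polynomial, so that $P_d$ is exactly the polynomial of \eqref{eq:Chebyshev-poly}; the three-term recurrence $P_{m+1}=xP_m-P_{m-1}$ shows by induction that each $P_m$ has integer coefficients and leading coefficient $1$. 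I would then take $1,P_1,\dots,P_{d-1}$ as a new basis of the space of polynomials of degree $<d$; since these have degrees $0,1,\dots,d-1$, the matrix $S$ expressing them in terms of $1,x,\dots,x^{d-1}$ is upper triangular with unit diagonal and integer entries, hence $S\in SL_d(\bZ)$ and $S^{-1}$ is integral too. Setting $\tilde T:=TS$, its $(i,j)$-entry is $P_{j-1}(\zeta_i)$ with $P_0$ read as the constant polynomial $1$; writing $\theta_i:=\pi(2i-1)/(2d)$ so that $\zeta_i=2\cos\theta_i$ and $\arccos(\zeta_i/2)=\theta_i$, this gives $\tilde T_{i,1}=1$ and $\tilde T_{i,j}=2\cos((j-1)\theta_i)$ for $j\ge 2$, whence $|\tilde T_{i,j}|\le 2$ — the first bullet.

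For the second bullet I would compute the Gram matrix $\tilde T^\top\tilde T$ entrywise. Expanding the products of cosines by the product-to-sum identities, each entry is a small integer combination of the trigonometric sums $\Sigma(m):=\sum_{i=1}^d\cos(m\theta_i)$. The key elementary lemma is that $\Sigma(0)=d$, while $\Sigma(m)=0$ for every integer $m$ with $0<m<2d$; this follows by writing $\Sigma(m)=\mathrm{Re}\sum_{i=1}^d e^{\mathrm{i}m\theta_i}$, a geometric series with common ratio $e^{\mathrm{i}m\pi/d}$ (which equals $1$ precisely when $2d\mid m$), and checking, according to the parity of $m$, that the real part of the closed-form sum vanishes. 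Granting the lemma: the $(1,1)$-entry is $\sum_i 1=d$; for $j\ge 2$ the $(j,j)$-entry is $\sum_i 4\cos^2((j-1)\theta_i)=2d+2\,\Sigma(2(j-1))=2d$ because $0<2(j-1)<2d$; the $(1,j)$-entry with $j\ge 2$ is $2\,\Sigma(j-1)=0$; and for $2\le j<j'\le d$ the $(j,j')$-entry equals $2\,\Sigma(j'-j)+2\,\Sigma(j+j'-2)=0$ since both indices lie strictly between $0$ and $2d$. This yields $\tilde T^\top\tilde T=\diag(d,2d,\dots,2d)$; in particular the columns of $\tilde T$ are mutually orthogonal, and as $S$ and $S^{-1}$ are integral, $T(\bZ^d)=\tilde T(\bZ^d)$ is an orthogonal lattice.

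The part requiring genuine (if routine) work is the evaluation of $\Sigma(m)$ together with bookkeeping of the index ranges, and the only subtle point is that one may \emph{not} simply rescale the genuine zeroth Chebyshev polynomial $P_0=2$ down to $1$: dividing a column by $2$ is not a unimodular operation. Instead one takes the constant polynomial $1$ itself as the zeroth basis vector, which is still monic of degree $0$ with integer coefficients, so the triangularity and unipotence of $S$ are unaffected — and this is exactly what makes the first diagonal entry $d$ rather than $4d$. Everything else reduces to the closed form for $\Sigma(m)$ and to observing that the indices $m$ arising off the diagonal, and in the diagonal entries with $j\ge 2$, never reach a multiple of $2d$.
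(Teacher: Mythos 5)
Your proof is correct. Note that the paper itself gives no proof of this statement --- it is quoted verbatim from the cited reference \cite[Theorem~1.1]{Kacwin2016ocf} --- so the comparison can only be with that source, whose argument is essentially the one you give: pass to the basis $1,P_1,\dots,P_{d-1}$ of monic integer Chebyshev-type polynomials via a unipotent upper-triangular $S\in SL_d(\bZ)$, so that $\tilde T_{i,j}=2\cos((j-1)\theta_i)$ for $j\ge 2$, and evaluate the Gram matrix through the vanishing of $\Sigma(m)=\sum_{i=1}^d\cos(m\theta_i)$ for $0<m<2d$. Your index bookkeeping checks out (all off-diagonal indices $j'-j$ and $j+j'-2$ lie strictly between $0$ and $2d$), and your observation that one must take the constant polynomial $1$ rather than $P_0=2$ as the zeroth basis element --- since dividing a column by $2$ is not unimodular --- is exactly the point that makes the first diagonal entry $d$ instead of $4d$.
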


Our algorithm is based on another property particular to the Chebyshev-Frolov lattices.
Our key observation is 
that the $2^n$-dimensional Chebyshev-Frolov lattice with a certain permutation of coordinates
is generated by a matrix $A_{n}$
which satisfies a recursive property as in \eqref{eq:definition-A}.
This property reduces the $2^n$-dimensional enumeration
to a number of $2^{n-1}$-dimensional enumerations as in Lemma~\ref{lem:recursive-domain}.
By applying this repeatedly, finally the enumeration is reduced to
nested 1-dimensional enumerations,
which can be implemented as $2^n$-folded for-loops.
We will expose them in Section~\ref{sec:algorithm}.
We will show that our algorithm is efficient up to $d=32$ in Section~\ref{sec:experiments}.

Another advantage of our algorithm is
that it can enumerate the Chebyshev-Frolov lattice points in arbitrary axis-parallel boxes.
This helps us to implement not only Frolov's cubature formula but also its randomization.
Randomized Frolov's cubature formula was introduced by Krieg and Novak \cite{Krieg2015uam}
and studied further by Ullrich \cite{Ullrich2016mcm}.
It inherits the prominent convergence behavior of the deterministic version as well as it is unbiased.
Further it also has the optimal order of convergence in the randomized sense
for Sobolev spaces with isotropic and mixed smoothness.
We will give how to enumerate the integration nodes of the deterministic and randomized versions
with our algorithm in Section~\ref{sec:Frolov}.

Throughout this paper we use the following notation.
The symbols $\bN$, $\bZ$, $\bQ$ and $\bR$ denote
the set of the non-negative integers, the integers, the rational numbers and the real numbers, respectively.
For $\bsx_1, \bsx_2 \in \bR^n$,
$(\bsx_1;\bsx_2) \in \bR^{2n}$ denotes the vector where $\bsx_1$ and $\bsx_2$ are vertically connected.
We denote $SL_d(\bZ)$ the special linear group of degree $d$ over $\bZ$,
i.e., the set of matrices over $\bZ$ whose determinant is 1.
For $x_1, \dots, x_d \in \bR$, 
$\diag(x_1, \dots, x_d)$ denotes the diagonal matrix with $(x_1, \dots, x_d)$ at the diagonal.
For a vector $\bsb = (b_1, \dots, b_d)^\top \in \bR^d$ and $\bsc= (c_1, \dots, c_d)^\top \in \bR^d$, we define
$[\bsb,\bsc] := \prod_{i=1}^d [b_i,c_i]$ and
$\max(\bsb, \bsc) := (\max(b_i, c_i))_{i=1}^d \in \bR^d$,
and denote $\bsb \leq \bsc$ if $b_i \leq c_i$ holds for all $1 \leq i \leq d$.

\section{Construction method of admissible lattices}\label{sec:admissible}

One general construction scheme for admissible lattices is studied in Temlyakov \cite[IV.4]{Temlyakov1993apf}.
Let $p_d(x) \in \bZ[x]$ be a $d$-dimensional polynomial with integer coefficients
satisfying the following three properties:
(i) its leading coefficient is 1,
(ii) it is irreducible over $\bQ$,
(iii) it has different $d$ real roots, say $\zeta_1, \dots, \zeta_d \in \bR$.
With these roots, we define a Vandermonde matrix $T$ by
\[
T = (\zeta_i^{j-1})_{i, j =1}^d.
\]
Then the lattice $T(\bZ)$ generated by $T$ is admissible.
Frolov used $q_d(x) = -1 + \prod_{j=1}^d (x -2j+1)$ in his paper \cite{Frolov1976ube}.
Note that he originally used the lattice made from $q_d(x)$ not for $T$ in \eqref{eq:Frolov-Formula} but for its dual lattice.
However, later it is shown that $T(\bZ^d)$ itself is admissible if and only if its dual lattice is admissible,
see \cite[Lemma~3.1]{Skriganov1994cud} and also \cite[Lemma~2.1]{Ullrich2016rfc} for a Vandermonde matrix.
One disadvantage of the choice of $q_d$ is that its roots are not given explicitly.

In \cite{Temlyakov1993apf} Temlyakov proposed to use the rescaled Chebyshev polynomials $P_d$ as in \eqref{eq:Chebyshev-poly}
when $d=2^n$ for a non-negative integer $n$.
It is shown that $P_d$ holds the condition (i) and (iii), and its roots are given as in \eqref{eq:root-Chebyshev-usual}.
Further $P_d$ is irreducible if and only if $d=2^n$.
Thus the Chebyshev-Frolov lattice, i.e., the lattice constructed as above with a use of  $P_d(x)$,
is admissible if and only if $d=2^n$.


\section{Enumeration of the Chebyshev-Frolov lattice points}\label{sec:algorithm}

\subsection{Recursive property of generating matrices}

Our considering Chebyshev-Frolov lattices are coordinate-permuted versions
of the usual ones.
Let $n \in \bN$ and put $d=2^n$.
We define $\sigma(n,k) \in \bZ$ for $1 \leq k \leq d$ recursively as 
$\sigma(0,1) = 1$ and
\[
\sigma(n+1,k) =
\begin{cases}
\sigma(n,k) & \text{if $1\leq k \leq d$}, \\
2d + 1 - \sigma(n, k-d) & \text{if $d +1 \leq k \leq 2d$}.
\end{cases}
\]
For all $n \in \bN$, the map $\sigma(n, \cdot)$ is a permutation on $\{1, \dots, d\}$,
which is shown by induction on $n$ as follows.
The case $n=0$ is trivial. We assume the lemma holds for $n$.
By the definition of $\sigma(n+1,k)$ and induction assumption,
$\sigma(n+1,\cdot)$ is a permutation on $\{1, \dots, d\}$
and also a permutation on $\{d+1, \dots, 2d\}$.
This proves the result for $n+1$.

We now define $\xi_{n,k} \in \bR$ as 
\[
\xi_{n,k} = 2 \cos \left(\frac{\pi(2 \sigma(n,k)-1)}{2d} \right) 
\quad \text{for $k = 1, \dots, d$},
\]
and consider a Vandermonde matrix $V_n \in \bR^{d}$ as
\[
V_n := (\xi_{n,i}^j)_{i,j =1}^{d} =
\begin{pmatrix}
1 & \xi_{n,1} & \cdots & \xi_{n,1}^{d-1} \\
1 & \xi_{n,2} & \cdots & \xi_{n,2}^{d-1} \\
\vdots & \vdots & \ddots & \vdots \\
1 & \xi_{n,d} & \cdots & \xi_{n,d}^{d-1}
\end{pmatrix}.
\]
Comparing $\xi_{n,k}$'s and $\zeta_{n,k}$'s defined as in \eqref{eq:root-Chebyshev-usual},
we find that $\xi_{n,k}$'s are also the roots of $P_d(x)$
since $\sigma(n, \cdot)$ is a permutation on $\{1, \dots, d\}$.
Thus the lattice $V_n(\bZ^d)$ is a coordinate permutation
of the usual Chebyshev-Frolov lattice.

Further we
define a diagonal matrix $D_n \in \bR^{d}$ as
\[
D_n := \diag(\xi_{n+1,1}, \dots, \xi_{n+1,d}).
\]
We are now ready to
define a matrix $A_n \in \bR^{d}$ recursively as
$A_0 = 1$ and
\begin{equation}\label{eq:definition-A}
A_{n+1} =
\begin{pmatrix}
A_n & D_n A_n \\
A_n & -D_n A_n
\end{pmatrix}.
\end{equation}
The following lemma shows that $A_n$ can be used as a generating matrix of the Chebyshev-Frolov lattices,
i.e., $V_n(\bZ^d) = A_n(\bZ^d)$.

\begin{lemma}\label{lem:matrix-recursive}
For all $n \in \bN$,
there exists $S_n \in \bZ^{2^n \times 2^n}$ such that $\det S_n = \pm 1$ and $V_n S_n = A_n$.
\end{lemma}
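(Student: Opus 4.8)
The plan is to prove the statement by induction on $n$, constructing $S_n$ explicitly and exploiting the block structure of $A_{n+1}$ in \eqref{eq:definition-A} together with the permuted-Vandermonde structure of $V_{n+1}$. The base case $n=0$ is trivial since $V_0 = A_0 = 1$, so we take $S_0 = 1$. For the inductive step, the key is to understand how the rows of $V_{n+1}$ relate to the rows of $V_n$ and $D_n V_n$. By the definition of $\sigma(n+1, \cdot)$, the first $d$ roots $\xi_{n+1,k}$ for $1 \leq k \leq d$ coincide with $\xi_{n,k}$, while for $d+1 \leq k \leq 2d$ we have $\xi_{n+1,k} = 2\cos(\pi(2(2d+1-\sigma(n,k-d))-1)/(2\cdot 2d)) = -\xi_{n,k-d}$, using the identity $\cos(\pi - \theta) = -\cos\theta$. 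Thus the bottom $d$ roots of $V_{n+1}$ are exactly the negatives of the top $d$ roots.

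Next I would analyze the columns. The matrix $V_{n+1} = (\xi_{n+1,i}^{\,j})_{i,j=1}^{2d}$ has columns indexed by powers $j = 1, \dots, 2d$. Writing $j = 2m$ or $j = 2m-1$, the even powers of $\xi_{n+1,i}$ are symmetric under $\xi \mapsto -\xi$ and the odd powers are antisymmetric. So after a column permutation that groups the odd-degree and even-degree columns, $V_{n+1}$ takes a block form whose top-left $d \times d$ block (odd-degree columns, top rows) is $\xi_{n+1,i}\cdot(\xi_{n,i}^{2\ell})_{\ell}$ and similarly for the other blocks; crucially these blocks are themselves, up to multiplication by the diagonal $D_n$ and by column operations over $\bZ$, equal to $V_n$ or $D_n V_n$. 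Concretely, one shows there is a permutation matrix $\Pi \in \bZ^{2d \times 2d}$ and integer column operations such that
\[
V_{n+1}\,(\Pi \cdot (\text{unimodular})) =
\begin{pmatrix}
V_n & D_n V_n \\
V_n & -D_n V_n
\end{pmatrix},
\]
where on the right we have inserted $V_n$ in place of $A_n$. Finally, applying the induction hypothesis $V_n S_n = A_n$ to each block — i.e. right-multiplying the above by $\diag(S_n, S_n)$, which lies in $SL_{2d}(\bZ)$ up to sign and commutes past the block structure since $D_n$ is diagonal (so $D_n V_n S_n = D_n A_n$) — yields $V_{n+1} S_{n+1} = A_{n+1}$ with $S_{n+1}$ the product of the column-permutation, the unimodular column operations, and $\diag(S_n, S_n)$; each factor has determinant $\pm 1$, hence so does $S_{n+1}$.

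I expect the main obstacle to be the bookkeeping in the middle step: verifying precisely which integer column operations convert the raw power-basis columns $(\xi^j)_{j=1}^{2d}$ of $V_{n+1}$ into the desired block layout with $V_n$ and $D_n V_n$ as blocks. The point is that $\xi_{n+1,i}^{2\ell}$, as a polynomial in $\xi_{n+1,i}$, must be rewritten in terms of the entries $\xi_{n,i}^{\,m}$ appearing in $V_n$ (recalling $\xi_{n,i} = \xi_{n+1,i}$ for the top block), and one must check this change of basis on the column space is realized by a matrix in $SL_d(\bZ)$ (up to sign) — this is where the even/odd power structure and a triangularity argument on degrees are needed. Once that reduction is in place, the diagonal factor $D_n$ passes through cleanly because it commutes with nothing but is applied blockwise exactly as in \eqref{eq:definition-A}, and the induction closes.
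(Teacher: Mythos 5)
Your overall architecture matches the paper's proof: induction on $n$, a column permutation of $V_{n+1}$ separating even-degree from odd-degree powers, a unimodular change of columns turning the blocks into $V_n$ and $\pm D_n V_n$, and finally right-multiplication by $\diag(S_n,S_n)$. However, there is a genuine error in the root relations on which the whole middle step rests. You claim $\xi_{n+1,k}=\xi_{n,k}$ for $1\le k\le d$ (and correspondingly $\xi_{n+1,k}=-\xi_{n,k-d}$ for the bottom rows). This is false: $\xi_{n+1,k}=2\cos\left(\pi(2\sigma(n,k)-1)/(4d)\right)$ while $\xi_{n,k}=2\cos\left(\pi(2\sigma(n,k)-1)/(2d)\right)$ --- the roots of $P_{2d}$ are not a superset of the roots of $P_d$. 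The correct relations, which the paper establishes, are $\xi_{n+1,i+d}=-\xi_{n+1,i}$ and, via the double-angle formula $\cos 2\theta = 2\cos^2\theta-1$, the identity $\xi_{n,i}=\xi_{n+1,i}^2-2$.

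This is not a cosmetic slip: it is exactly the ingredient needed to close the step you yourself flag as ``the main obstacle.'' Under your (incorrect) identity $\xi_{n,i}=\xi_{n+1,i}$, the even-degree block $(\xi_{n+1,i}^{2\ell})_{\ell=0}^{d-1}$ would have to be converted into $(\xi_{n,i}^{m})_{m=0}^{d-1}=(\xi_{n+1,i}^{m})_{m=0}^{d-1}$, and no unimodular integer column operation relates the monomials $1,x^2,\dots,x^{2(d-1)}$ to $1,x,\dots,x^{d-1}$; the triangularity argument on degrees you invoke has nothing to act on. With the correct identity, $\xi_{n,i}^{m}=(\xi_{n+1,i}^2-2)^m$ is a monic integer polynomial of degree $m$ in the variable $\xi_{n+1,i}^2$, so the change of basis on the even-degree columns is realized explicitly by the upper-triangular matrix $U_n$ with entries $u_{i,j}=(-2)^{j-i}\binom{j-1}{i-1}$ and unit diagonal, hence $U_n\in SL_d(\bZ)$; the same $U_n$ handles the odd-degree columns after factoring out one power of $\xi_{n+1,i}$, which is what produces the $D_nV_n$ and $-D_nV_n$ blocks. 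Supplying this identity and the matrix $U_n$ turns your outline into the paper's proof.
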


\begin{proof}
We prove the lemma by induction on $n$.
The case $n=0$ is trivial since $V_0 = A_0 = 1$.
Now we assume that the lemma holds for $n$ and show for $n+1$.
Put $d = 2^n$.
Define a matrix $V'_{n+1} \in \bR^{2d}$ obtained by column swapping of $V_{n+1}$ as
\[
V'_{n+1} =
\begin{pmatrix}
1 & \xi_{n+1,1}^2 & \cdots & \xi_{n+1,1}^{2(d-1)} & \xi_{n+1,1} & \xi_{n+1,1}^3 & \cdots & \xi_{n+1,1}^{2d-1}\\
1 & \xi_{n+1,2}^2 & \cdots & \xi_{n+1,2}^{2(d-1)} & \xi_{n+1,2} & \xi_{n+1,2}^3 & \cdots & \xi_{n+1,2}^{2d-1}\\
\vdots & \vdots & \ddots & \vdots & \vdots & \vdots & \ddots & \vdots \\
1 & \xi_{n+1,2d}^2 & \cdots & \xi_{n+1,2d}^{2(d-1)} & \xi_{n+1,2d} & \xi_{n+1,2d}^3 & \cdots & \xi_{n+1,2d}^{2d-1}\\
\end{pmatrix}.
\]
Since $V'_{n+1}$ is obtained by column swapping of $V_{n+1}$,
there exists $W_{n+1} \in \bZ^{2d \times 2d}$
such that $\det W_{n+1} = \pm 1$ and $V'_{n+1} = V_{n+1} W_{n+1}$.

Define $U_{n} = (u_{i,j})_{i,j=1}^d \in \bZ^{d \times d}$ as
\[
u_{i,j} = (-2)^{j-i} \binom{j-1}{i-1},
\]
where $\binom{j}{i}$ is a binomial coefficient and is defined to be zero if $i>j$.
Since $U_{n}$ is upper-triangular and all the diagonal entries are 1,
$U_{n} \in SL_d(\bZ)$ holds.
We now compute
$
V'_{n+1}
\begin{pmatrix}
U_{n} & O\\
O & U_{n}
\end{pmatrix}
$.
We have $\xi_{n+1,i+d} = -\xi_{n+1,i}$ for $1 \leq i \leq d$.
Further, using the formula $\cos{2\theta} = 2\cos^2 \theta -1$,
we have $\xi_{n,i} = \xi_{n+1,i}^2 -2 = \xi_{n+1,i+d}^2 -2$ for $1 \leq i \leq d$
and thus $\xi_{n,i}^a = (\xi_{n+1,i}^2 -2)^a = (\xi_{n+1,i+d}^2 -2)^a$ for all $a \in \bN$.
Thus we have 
\begin{align*}
V'_{n+1}
\begin{pmatrix}
U_{n} & O\\
O & U_{n}
\end{pmatrix}
=
\begin{pmatrix}
V_{n} & D_{n} V_{n} \\
V_{n} & -D_{n} V_{n}
\end{pmatrix}
.
\end{align*}

By induction assumption,
there exists $S_n \in \bZ^{d \times d}$ such that $\det S_n = \pm 1$ and $V_n S_n = A_n$.
Hence
\begin{align*}
\begin{pmatrix}
V_{n} & D_{n} V_{n} \\
V_{n} & -D_{n} V_{n}
\end{pmatrix}
\begin{pmatrix}
S_{n} & O \\
O & S_{n}
\end{pmatrix}
=
\begin{pmatrix}
A_{n} & D_{n} A_{n} \\
A_{n} & -D_{n} A_{n}
\end{pmatrix}
= A_{n+1}.
\end{align*}
Thus we have shown that $V_{n+1} S_{n+1} = A_{n+1}$
with
\[
S_{n+1} = W_{n+1}
\begin{pmatrix}
U_{n} & O\\
O & U_{n}
\end{pmatrix}
\begin{pmatrix}
S_{n} & O \\
O & S_{n}
\end{pmatrix}
.
\]
This shows that the lemma holds for $n+1$.
\end{proof}

\subsection{Recursive enumeration}
In this subsection we give a recursive algorithm to obtain the Chebyshev-Frolov lattice points.
We start with the definition of functions
which is used to state Lemma~\ref{lem:recursive-domain} to reduce a $2^{n+1}$-dimensional enumeration
to $2^n$-dimensional enumerations.
\begin{definition}
Let $n \in \bN$ and $d := 2^n$.
Let $\bsa_1, \bsb_1, \bsb_2, \bsc_1, \bsc_2 \in \bR^d$
and $\bsb = (\bsb_1; \bsb_2), \bsc := (\bsc_1; \bsc_2) \in \bR^{2d}$.
We define functions $\ave{n}{\bsb}$, $\Rb{n}{\bsa_1}{\bsb}{\bsc}$ and $\Rc{n}{\bsa_1}{\bsb}{\bsc}$ as 
\begin{align*} 
\ave{n}{\bsb} &= (\bsb_1 + \bsb_2)/2 \in \bR^d,\\
\Rb{n}{\bsa_1}{\bsb}{\bsc} &= D_n^{-1}\max(\bsb_1 - \bsa_1, -\bsc_2 + \bsa_1) \in \bR^d,\\
\Rc{n}{\bsa_1}{\bsb}{\bsc} &= D_n^{-1}\min(\bsc_1 - \bsa_1, -\bsb_2 + \bsa_1)  \in \bR^d.
\end{align*}
\end{definition}

\begin{lemma}\label{lem:recursive-domain}
Let $n \in \bN$ and put $d = 2^n$.
Let $\bsb_1, \bsb_2, \bsc_1, \bsc_2, \bsx_1, \bsx_2 \in \bR^d$
and define $\bsb, \bsc, \bsx \in \bR^{2d}$ as
 $\bsb = (\bsb_1; \bsb_2)$, $\bsc := (\bsc_1; \bsc_2)$ and $\bsx := (\bsx_1; \bsx_2)$.
Then the inequality
$\bsb \leq A_{n+1} \bsx \leq \bsc$
is equivalent to the simultaneous inequalities
\begin{numcases}{}
\ave{n}{\bsb} \leq A_n \bsx_1 \leq \ave{n}{\bsc}, \label{eq:equiv-1} \\
\Rb{n}{A_n \bsx_1}{\bsb}{\bsc} \leq A_n \bsx_2 \leq \Rc{n}{A_n \bsx_1}{\bsb}{\bsc}. \label{eq:equiv-2}
\end{numcases}
\end{lemma}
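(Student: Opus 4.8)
The plan is to unfold the block recursion \eqref{eq:definition-A} for $A_{n+1}$ and then rewrite the resulting system of componentwise inequalities, tracking the order relation $\leq$ on $\bR^{d}$ carefully.

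First, with $\bsx = (\bsx_1;\bsx_2)$ and \eqref{eq:definition-A} one computes
\[
A_{n+1}\bsx = (A_n\bsx_1 + D_nA_n\bsx_2;\ A_n\bsx_1 - D_nA_n\bsx_2),
\]
so $\bsb \leq A_{n+1}\bsx \leq \bsc$ is the conjunction of $\bsb_1 \leq A_n\bsx_1 + D_nA_n\bsx_2 \leq \bsc_1$ and $\bsb_2 \leq A_n\bsx_1 - D_nA_n\bsx_2 \leq \bsc_2$. Abbreviating $\bsu := A_n\bsx_1$ and $\bsv := D_nA_n\bsx_2$, this reads $\bsb_1 \leq \bsu + \bsv \leq \bsc_1$ together with $\bsb_2 \leq \bsu - \bsv \leq \bsc_2$. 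Adding the two lower bounds and the two upper bounds componentwise gives $\bsb_1 + \bsb_2 \leq 2\bsu \leq \bsc_1 + \bsc_2$, i.e. $\ave{n}{\bsb} \leq A_n\bsx_1 \leq \ave{n}{\bsc}$, which is \eqref{eq:equiv-1}. Next, isolating $\bsv$ in the four inequalities yields $\bsb_1 - \bsu \leq \bsv$, $-\bsc_2 + \bsu \leq \bsv$, $\bsv \leq \bsc_1 - \bsu$ and $\bsv \leq -\bsb_2 + \bsu$; using the componentwise equivalences $\max(p,q)\leq r \iff (p\leq r \text{ and } q\leq r)$ and $r \leq \min(p,q) \iff (r\leq p \text{ and } r\leq q)$, these four collapse to $\max(\bsb_1-\bsu,-\bsc_2+\bsu) \leq \bsv \leq \min(\bsc_1-\bsu,-\bsb_2+\bsu)$. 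Since $D_n = \diag(\xi_{n+1,1},\dots,\xi_{n+1,d})$ has strictly positive diagonal entries --- because $1 \leq \sigma(n+1,k) = \sigma(n,k) \leq 2^n$ forces the angle $\pi(2\sigma(n+1,k)-1)/2^{n+2}$ into $(0,\pi/2)$, so each $\xi_{n+1,k} > 0$ --- multiplication by $D_n^{-1}$ preserves the componentwise order, and applying it to $\bsv = D_nA_n\bsx_2$ turns the last display into $\Rb{n}{A_n\bsx_1}{\bsb}{\bsc} \leq A_n\bsx_2 \leq \Rc{n}{A_n\bsx_1}{\bsb}{\bsc}$, which is \eqref{eq:equiv-2}.

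For the converse direction, every step above is reversible: multiplying \eqref{eq:equiv-2} by $D_n$ recovers $\max(\bsb_1-\bsu,-\bsc_2+\bsu) \leq \bsv \leq \min(\bsc_1-\bsu,-\bsb_2+\bsu)$, from which one reads off $\bsb_1\leq\bsu+\bsv$, $\bsu+\bsv\leq\bsc_1$, $\bsb_2\leq\bsu-\bsv$ and $\bsu-\bsv\leq\bsc_2$, and substituting back $\bsu = A_n\bsx_1$, $\bsv = D_nA_n\bsx_2$ into the block form gives $\bsb\leq A_{n+1}\bsx\leq\bsc$. (One may note that \eqref{eq:equiv-1} is in fact already implied by \eqref{eq:equiv-2}, since the consistency $\max(\cdots)\leq\min(\cdots)$ forces $\bsb_1+\bsb_2\leq 2\bsu\leq\bsc_1+\bsc_2$; it is recorded separately because the enumeration first sweeps the $\bsx_1$ admitted by \eqref{eq:equiv-1} and only then the $\bsx_2$ admitted by \eqref{eq:equiv-2}.)

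The only point requiring genuine care is the positivity of the diagonal of $D_n$: this is exactly what makes the passage from a two-sided bound on $D_nA_n\bsx_2$ to one on $A_n\bsx_2$ an equivalence rather than a mere implication. Everything else is routine bookkeeping with the partial order on $\bR^{d}$ and with $\max$ and $\min$; in particular the argument uses only the explicit block shape \eqref{eq:definition-A} and not the invertibility of $A_n$, so I anticipate no further obstacle.
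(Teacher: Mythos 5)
Your proof is correct and follows essentially the same route as the paper's: unfold the block form \eqref{eq:definition-A}, add the two two-sided bounds to obtain \eqref{eq:equiv-1}, then isolate $D_nA_n\bsx_2$, combine via $\max$/$\min$, and divide by $D_n$ using the positivity of its diagonal to obtain \eqref{eq:equiv-2}. The only additions are cosmetic improvements: you verify explicitly that the diagonal entries $\xi_{n+1,k}$ of $D_n$ are positive (the paper merely asserts this), and your parenthetical observation that \eqref{eq:equiv-2} already implies \eqref{eq:equiv-1} is a correct reflection of the fact that the original block system is outright equivalent to \eqref{eq:equiv-2} alone.
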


\begin{proof}
From Lemma~\ref{lem:matrix-recursive}, $\bsb \leq A_{n+1} \bsx \leq \bsc$ is equivalent to
\begin{equation}\label{eq:ineq-first}
\begin{cases}
\bsb_1 \leq A_n \bsx_1 + D_n A_n \bsx_2 \leq \bsc_1,  \\
\bsb_2 \leq A_n \bsx_1 - D_n A_n \bsx_2 \leq \bsc_2.
\end{cases}
\end{equation}
By adding the inequalities in \eqref{eq:ineq-first} we have
\begin{equation}\label{eq:ineq-add}
\ave{n}{\bsb} \leq A_n \bsx_1 \leq \ave{n}{\bsc}.
\end{equation}
On the other hand, 
\eqref{eq:ineq-first} is equivalent to
\begin{equation*}
\begin{cases}
\bsb_1 - A_n \bsx_1 \leq D_n A_n \bsx_2 \leq \bsc_1 - A_n \bsx_1, & \\
-\bsc_2 + A_n \bsx_1 \leq D_n A_n \bsx_2 \leq - \bsb_2 + A_n \bsx_1, &
\end{cases}
\end{equation*}
which is equivalent to 
\[
\max(\bsb_1 - A_n \bsx_1, -\bsc_2 + A_n \bsx_1) \leq D_n A_n \bsx_2
\leq \min(\bsc_1 - A_n \bsx_1, -\bsb_2 + A_n \bsx_1).
\]
Since $D_n$ is a diagonal matrix whose diagonal entries are positive, this inequality is equivalent to
\begin{equation}\label{eq:ineq-equiv}
\Rb{n}{A_n \bsx_1}{\bsb}{\bsc} \leq A_n \bsx_2 \leq \Rc{n}{A_n \bsx_1}{\bsb}{\bsc}.
\end{equation}
Thus we have
\[
\eqref{eq:ineq-first}
\iff \text{\eqref{eq:ineq-first} and \eqref{eq:ineq-add}}
\iff \text{\eqref{eq:ineq-equiv} and \eqref{eq:ineq-add}},
\]
which is what we desired to prove.
\end{proof}

\begin{remark}
Assume that $\bsb \leq \bsc$.
Then, for fixed $\bsx_1$ with \eqref{eq:equiv-1},
we can see that
$\Rb{n}{A_n \bsx_1}{\bsb}{\bsc} \leq \Rc{n}{A_n \bsx_1}{\bsb}{\bsc}$
and thus there exists $\bsx_2$ which satisfies \eqref{eq:ineq-equiv}.
\end{remark}

Let $n \in \bN$ and $\bsb, \bsc \in \bR^d$.
We define
\[
\Generatorshort{n}{\bsb}{\bsc}
:= \{\bsk \in \bZ^d \mid \bsb \leq A_n \bsk \leq \bsc \}.
\]
Lemma~\ref{lem:recursive-domain} implies the following theorem to give $\Generatorshort{n}{\bsb}{\bsc}$.

\begin{theorem}\label{thm:Frolov-recursive}
Let $n \in \bN$, $d := 2^n$ and $\bsb, \bsc \in \bR^{2d}$.
Then we have
\[
\Generatorshort{n+1}{\bsb}{\bsc}
= \left\{
\left(
\begin{aligned}
\bsk_1 \\
\bsk_2 
\end{aligned}
\right) \in \bR^{2d}
\;\middle|\,
\begin{aligned}
\bsk_1 &\in \Generatorshort{n}{\ave{n}{\bsb}}{\ave{n}{\bsc}},\\
\bsk_2 &\in \Generatorshort{n}{\Rb{n}{A_n \bsk_1}{\bsb}{\bsc}}{\Rc{n}{A_n \bsk_1}{\bsb}{\bsc}} 
\end{aligned}
\right\}.
\]
\end{theorem}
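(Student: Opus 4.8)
The plan is to deduce Theorem~\ref{thm:Frolov-recursive} directly from Lemma~\ref{lem:recursive-domain} by unwinding the definitions of $\Generatorshort{n}{\cdot}{\cdot}$ and $\Generatorshort{n+1}{\cdot}{\cdot}$. First I would fix $\bsk = (\bsk_1;\bsk_2) \in \bZ^{2d}$, where $\bsk_1, \bsk_2 \in \bZ^d$, and observe that by definition $\bsk \in \Generatorshort{n+1}{\bsb}{\bsc}$ is precisely the statement $\bsb \leq A_{n+1}\bsk \leq \bsc$. Applying Lemma~\ref{lem:recursive-domain} with $\bsx_1 = \bsk_1$ and $\bsx_2 = \bsk_2$, this is equivalent to the conjunction of \eqref{eq:equiv-1} and \eqref{eq:equiv-2}, i.e.\ to
\[
\ave{n}{\bsb} \leq A_n \bsk_1 \leq \ave{n}{\bsc}
\quad\text{and}\quad
\Rb{n}{A_n \bsk_1}{\bsb}{\bsc} \leq A_n \bsk_2 \leq \Rc{n}{A_n \bsk_1}{\bsb}{\bsc}.
\]
The first of these two conditions says exactly $\bsk_1 \in \Generatorshort{n}{\ave{n}{\bsb}}{\ave{n}{\bsc}}$; the second says exactly $\bsk_2 \in \Generatorshort{n}{\Rb{n}{A_n \bsk_1}{\bsb}{\bsc}}{\Rc{n}{A_n \bsk_1}{\bsb}{\bsc}}$. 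Combining these two membership statements yields that $\bsk$ lies in the set on the right-hand side, and conversely, so the two sets are equal.

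The only subtlety I anticipate is a bookkeeping one rather than a mathematical obstacle: Lemma~\ref{lem:recursive-domain} is phrased for arbitrary real vectors $\bsx_1, \bsx_2 \in \bR^d$, so I must be careful that the equivalence still characterizes \emph{integer} points correctly. This is immediate: restricting $\bsx_1, \bsx_2$ to range over $\bZ^d$ on both sides of the equivalence preserves it, and the set $\Generatorshort{n}{\bsb}{\bsc}$ is by definition the integer solution set of $\bsb \le A_n \cdot \le \bsc$, so the inner condition on $\bsk_2$ with $\bsk_1$ fixed really does describe $\Generatorshort{n}{\Rb{n}{A_n \bsk_1}{\bsb}{\bsc}}{\Rc{n}{A_n \bsk_1}{\bsb}{\bsc}}$ as a subset of $\bZ^d$. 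One should also note that $A_n \bsk_1 \in \bR^d$ is a perfectly valid argument for the functions $\phi_n$ and $\psi_n$ in the role of $\bsa_1$, so no domain issue arises there. Thus the proof is essentially a one-line invocation of Lemma~\ref{lem:recursive-domain} followed by rewriting each pair of inequalities as a set-membership assertion; there is no hard part, and I would present it in two or three short sentences.
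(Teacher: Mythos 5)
Your proof is correct and matches the paper's intent exactly: the paper states this theorem as an immediate consequence of Lemma~\ref{lem:recursive-domain} without writing out the details, and your argument simply makes explicit the unwinding of definitions (applying the lemma with $\bsx_1 = \bsk_1$, $\bsx_2 = \bsk_2$ and translating each pair of inequalities into a membership statement). No gaps; the remarks about integrality and the domain of $\phi_n$, $\psi_n$ are correct though not strictly necessary.
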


This theorem reduces
an enumeration in dimension $2^{n+1}$ to enumerations in dimension $2^n$.
Further the case $n=0$ is easy to solve,
since $k \in \Generatorshort{0}{b}{c}$ for $k \in \bZ$ and $b,c \in \bR$ is equivalent to $b \leq k \leq c$.
This justifies Algorithm~\ref{alg:recursive} to obtain the set $\Generatorshort{n}{\bsb}{\bsc}$.

\begin{algorithm}
\caption{Recursive algorithm to obtain the set $\Generatorshort{n}{\bsb}{\bsc}$}\label{alg:recursive}
\begin{algorithmic}[1]
\Procedure{Set}{$n,\bsb,\bsc$}\Comment{Output the set $\Generatorshort{n}{\bsb}{\bsc}$}
\If{$n=0$}
\State \textbf{return} $\{k \in \bZ \mid \lceil \bsb \rceil \leq k \leq \lfloor \bsc \rfloor \}$
\Comment{In this case $\bsb$ and $\bsc$ are scalar}
\Else
\State $P \gets \text{empty set}$ \Comment{Initialize $P$ as the empty set}
\ForAll{$\bsk_1 \in \Call{Set}{n-1 ,\ave{n-1}{\bsb}, \ave{n-1}{\bsc}}$}
\ForAll{$\bsk_2 \in \Call{Set}{n-1 , \Rb{n-1}{A_{n-1} \bsk_1}{\bsb}{\bsc}, \Rc{n-1}{A_{n-1} \bsk_1}{\bsb}{\bsc}}$}
\State \textbf{append} $P \gets (\bsk_1; \bsk_2)$ \Comment{Append a point to the set $P$}
\EndFor
\EndFor
\State \textbf{return} $P$
\EndIf
\EndProcedure
\end{algorithmic}
\end{algorithm}

\subsection{Sequential enumeration}
One disadvantage of Algorithm~\ref{alg:recursive} is that it needs much memory.
In this subsection, to defeat this disadvantage
we derive simultaneous inequalities equivalent to $\bsb \leq A_n \bsx \leq \bsc$
by applying Lemma~\ref{lem:recursive-domain} repeatedly
and then we give a sequential enumeration algorithm.

We begin with an illustration for the case $n=2$.
Fix $\bsb, \bsc \in \bR^4$ and let $\bsx = (x_1; x_2; x_3; x_4)$.
Our aim is to obtain simultaneous inequalities which are equivalent to $\bsb \leq A_2 \bsx \leq \bsc$.
From Lemma~\ref{lem:recursive-domain},
it is reduced to
\begin{numcases}{}
\Boxineq{1}{\bsbeta_{1,1}}{(x_1;x_2)}{\bsgamma_{1,1}}, & \label{eq:level1-0} \\
\Boxineq{1}{\bsbeta_{1,2}}{(x_3;x_4)}{\bsgamma_{1,2}}. & \label{eq:level1-1}
\end{numcases}
where we put $\bsbeta_{1,1} := \ave{1}{\bsb}$,
$\bsgamma_{1,1} := \ave{1}{\bsc}$,
$\bsbeta_{1,2} := \Rb{1}{A_1 (x_1;x_2)}{\bsb}{\bsc}$
and $\bsgamma_{1,2} := \Rc{1}{A_1 (x_1;x_2)}{\bsb}{\bsc}$.
Whereas $\bsbeta_{1,2}$ and $\bsgamma_{1,2}$ are not determined until $x_1$ and $x_2$ are fixed,
$\bsbeta_{1,1}$ and $\bsgamma_{1,1}$ are determined using only $\bsb$ and $\bsc$.
Hence we first consider \eqref{eq:level1-0}.
Again from Lemma~\ref{lem:recursive-domain}, \eqref{eq:level1-0} is reduced to
\begin{numcases}{}
\Boxineq{0}{\beta_{0,1}}{x_1}{\gamma_{0,1}}, & \label{eq:level2-0}\\
\Boxineq{0}{\beta_{0,2}}{x_2}{\gamma_{0,2}}, & \label{eq:level2-1}
\end{numcases}
where we put
$\beta_{0,1} := \ave{0}{\bsbeta_{1,1}}$,
$\gamma_{0,1} := \ave{0}{\bsgamma_{1,1}}$,
$\beta_{0,2} := \Rb{0}{A_0 x_1}{\bsbeta_{1,1}}{\bsgamma_{1,1}}$
and $\gamma_{0,2} := \Rc{0}{A_0 x_1}{\bsbeta_{1,1}}{\bsgamma_{1,1}}$.
Whereas $\beta_{0,2}$ and $\gamma_{0,2}$ are not determined until $x_1$ is fixed,
$\beta_{0,1}$ and $\gamma_{0,1}$ are determined using only $\bsb$ and $\bsc$.
Thus we can fix $x_1$ satisfying \eqref{eq:level2-0}.
Once $x_1$ is fixed, $\beta_{0,2}$ and $\gamma_{0,2}$ are determined
and thus we can fix $x_2$ with \eqref{eq:level2-1}.
Once $x_2$ is fixed, then $\bsbeta_{1,2}$ and $\bsgamma_{1,2}$ are determined,
and again from Lemma~\ref{lem:recursive-domain}, Inequality \eqref{eq:level1-1} is reduced to 
\begin{numcases}{}
\Boxineq{0}{\beta_{0,3}}{x_3}{\gamma_{0,3}}, & \label{eq:level2-2}\\
\Boxineq{0}{\beta_{0,4}}{x_4}{\gamma_{0,4}}, & \label{eq:level2-3}
\end{numcases}
where we put $\beta_{0,3} := \ave{0}{\bsbeta_{1,2}}$, $\gamma_{0,3} := \ave{0}{\bsgamma_{1,2}}$,
$\beta_{0,4} := \Rb{0}{A_0 x_3}{\bsbeta_{1,2}}{\bsgamma_{1,2}}$
and $\gamma_{0,4} := \Rc{0}{A_0 x_3}{\bsbeta_{1,2}}{\bsgamma_{1,2}}$.
Now $\beta_{0,3}$ and $\gamma_{0,3}$ are determined and we can fix $x_3$ with \eqref{eq:level2-2}.
Once $x_3$ is fixed, $\beta_{0,4}$ and $\gamma_{0,4}$ are determined
and thus we can fix $x_4$ with \eqref{eq:level2-3}.
In this way, we have shown that
$\bsb \leq A_2 \bsx \leq \bsc$ is equivalent to
the simultaneous inequalities 
\eqref{eq:level2-0}--\eqref{eq:level2-3},
where $\beta_{0,1}$ and $\gamma_{0,1}$ are already determined
and $\beta_{0,i}$ and $\gamma_{0,i}$ are determined when $x_1, \dots, x_{i-1}$ are fixed ($i=2,3,4$).
This equivalence allows us to implement the enumeration of the vectors
$\bsk \in \bZ^4$ with $\bsb \leq A_2 \bsk \leq \bsc$
by 4-folded for-loops or an equivalent tail-recursion.

We now generalize the procedure for any $n \in \bN$.
Hereafter, to clarify which coordinates we consider, we use the following notation.
\begin{definition}
Let $n, L, a \in \bN$ with $0 \leq L \leq n$, $1 \leq a \leq 2^{n-L}$
and $\bsb, \bsc \in \bR^d$.
Put $d' := 2^L$. We define
\begin{align*}
\bsxs{L}{a} &:= (x_{(a-1)d'+1}, \dots, x_{ad'})^\top \in \bZ^{d'},\\
\bsalphas{L}{a} &:= A_L \bsxs{L}{a} \in \bR^{d'}.
\end{align*}
\end{definition}

Put $d:=2^n$ and fix $\bsb, \bsc \in \bR^d$.
Our aim is to reduce $\bsb \leq A_n \bsxs{n}{1} \leq \bsc$
to simultaneous 1-dimensional inequalities.
Put $\bsbetas{n}{1} := \bsb$ and $\bsgammas{n}{1} :=\bsc$.
From Lemma~\ref{lem:recursive-domain},
for all $0 \leq L \leq n$, $1 \leq a \leq 2^{n-L}$
an inequality $\bsbetas{L}{a} \leq A_L \bsxs{L}{a} \leq \bsgammas{L}{a}$ is reduced to
\[
\begin{cases}
\bsbetas{L-1}{2a-1} \leq A_n \bsxs{L-1}{2a-1} \leq \bsgammas{L-1}{2a-1},\\
\bsbetas{L-1}{2a} \leq A_n \bsxs{L-1}{2a} \leq \bsgammas{L-1}{2a},
\end{cases}
\]
where $\bsbetas{L}{a}, \bsgammas{L}{a} \in \bR^{2^L}$ are defined as 
\begin{align}
\bsbetas{L-1}{2a-1}       &= \ave{L-1}{\bsbetas{L}{a}}, \label{eq:beta-left}\\
\bsgammas{L-1}{2a-1}    &= \ave{L-1}{\bsgammas{L}{a}}, \label{eq:gamma-left}\\
\bsbetas{L-1}{2a}    &= \Rb{L-1}{\bsalphas{L-1}{2a-1}}{\bsbetas{L}{a}}{\bsgammas{L}{a}}, \label{eq:beta-right}\\
\bsgammas{L-1}{2a} &= \Rc{L-1}{\bsalphas{L-1}{2a-1}}{\bsbetas{L}{a}}{\bsgammas{L}{a}}. \label{eq:gamma-right}
\end{align}
We have seen
that $\bsalphas{L}{a}$'s, $\bsbetas{L}{a}$'s and $\bsgammas{L}{a}$'s depend on each other
and some of them are not determined until some of $k_{i}$'s are fixed.
The dependence between $\bsalphas{L}{a}$'s are given as follows.
For $\bsalpha_1, \bsalpha_2 \in \bR^{2^L}$, define
\[
\UPa{L+1}{\bsalpha_1}{\bsalpha_2}
:= (\bsalpha_1 + D_L \bsalpha_2; \bsalpha_1 - D_L \bsalpha_2) \in \bR^{2^{L+1}}.
\]
Then for $1 \leq L \leq n$ and $1 \leq a \leq 2^{n-L}$ it follows from \eqref{eq:definition-A} that 
\begin{equation}\label{eq:alpha-update}
\bsalphas{L}{a} = \UPa{L}{\bsalphas{L-1}{2a-1}}{\bsalphas{L-1}{2a}}.
\end{equation}

We now study how those values are determined.
We define the sets $\Aset[i]$ and $\Bset[i]$ for $i \in \bN$, $0 \leq i \leq 2^n$
recursively as
\[
\Aset[0] := \emptyset, \qquad \Bset[0] := \{(j, 1) \mid j \in \bN, 0 \leq j \leq r\},
\]
and, for $i = 2^r p$ where $r \in \bN$ and $p$ is an odd integer,
\begin{align*}
\Aset[i] &= \Aset[i-1] \cup \{(j, 2^{r-j}p) \mid j \in \bN, 0 \leq j \leq r\}, \\
\Bset[i] &= \Bset[i-1] \cup \{(j, 2^{r-j}p+1) \mid j \in \bN, 0 \leq j \leq r\}.
\end{align*}
The following lemmas show that these sets control the determination of the values.

\begin{lemma}\label{lem:determine-alpha}
Let $i \in \bN$, $0 \leq i \leq 2^n$.
Let $x_1, \dots, x_i$ be fixed.
If $(L,a) \in \Aset$ holds, then $\bsalphas{L}{a}$ is determined.
\end{lemma}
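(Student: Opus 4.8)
The plan is to reduce the statement to a purely arithmetical fact about the index set $\Aset[i]$. By definition $\bsalphas{L}{a} = A_L \bsxs{L}{a}$, where the matrix $A_L$ does not depend on any of the variables $x_1, x_2, \dots$, and $\bsxs{L}{a} = (x_{(a-1)2^L+1}, \dots, x_{a2^L})^\top$ is the block of $2^L$ consecutive coordinates ending at index $a2^L$. Consequently $\bsalphas{L}{a}$ is determined as soon as $x_1, \dots, x_{a2^L}$ are all fixed, that is, as soon as $i \geq a2^L$. So it is enough to prove the implication
\[
(L,a) \in \Aset[i] \implies a2^L \leq i .
\]

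I would establish this by induction on $i$, for $0 \leq i \leq 2^n$. The base case is trivial since $\Aset[0] = \emptyset$. For the inductive step, assume the implication holds with $i$ replaced by $i-1$, and write $i = 2^r p$ with $r \in \bN$ and $p$ odd, so that by definition $\Aset[i] = \Aset[i-1] \cup \{(j, 2^{r-j}p) \mid 0 \leq j \leq r\}$. If $(L,a) \in \Aset[i-1]$, then $a2^L \leq i-1 < i$ by the induction hypothesis. If instead $(L,a) = (j, 2^{r-j}p)$ for some $0 \leq j \leq r$, then $a2^L = 2^{r-j}p \cdot 2^j = 2^r p = i$. In either case $a2^L \leq i$, completing the induction.

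Putting the two observations together yields the lemma: whenever $(L,a) \in \Aset[i]$ and $x_1, \dots, x_i$ are fixed, we have $a2^L \leq i$, hence every entry of $\bsxs{L}{a}$ lies among $x_1, \dots, x_i$, so $\bsalphas{L}{a} = A_L \bsxs{L}{a}$ is determined. I do not anticipate a serious obstacle; the only delicate point is the bookkeeping in the inductive step, where the $2$-adic valuation of $i$ may exceed that of $i-1$ (for instance when $i$ is a power of $2$), so that $\Aset[i]$ acquires several new pairs simultaneously — but the single identity $a2^L = 2^r p = i$ disposes of all of them at once. (In fact the argument shows a little more, namely $\Aset[i] = \{(L,a) : a2^L \leq i\}$, which serves as a useful sanity check on the recursive definition.)
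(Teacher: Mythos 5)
Your proof is correct, but it is not the route the paper's proof takes --- it is precisely the ``direct'' argument that the authors themselves sketch in the remark immediately following their proof. You reduce everything to the arithmetical fact that $(L,a) \in \Aset$ implies $a2^L \leq i$ (indeed your sanity-check identity $\Aset = \{(L,a) : a2^L \leq i\}$ is the cleanest way to see what $\Aset$ is), and then invoke the definition $\bsalphas{L}{a} = A_L \bsxs{L}{a}$, so that fixing $x_1,\dots,x_i$ fixes every entry of $\bsxs{L}{a}$. The paper instead runs an induction that never multiplies by $A_L$ directly: it determines $\bsalphas{0}{2^r p} = x_i$, notes that $(j, 2^{r-j}p-1) \in \Aset[2^r p - 2^j] \subset \Aset[i-1]$ for $0 \leq j < r$, and then climbs up via the update rule $\bsalphas{L}{a} = \UPa{L}{\bsalphas{L-1}{2a-1}}{\bsalphas{L-1}{2a}}$ from \eqref{eq:alpha-update}. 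The trade-off is explicit in the paper: your argument is shorter and more transparent as a proof of the bare statement, but the paper's constructive induction is the one that yields the FFT-like sequential computation implemented in the \textsc{UpdateAlpha} routine of Algorithm~\ref{alg:tail-recursive}, which is the whole point of introducing $\Aset$ in the first place. So nothing is missing logically from your write-up, but if the lemma is meant to justify the algorithm's update procedure (and not merely the truth of ``is determined''), the paper's proof carries information yours discards.
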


\begin{proof}
We prove the lemma by induction on $i$.
If $i=0$, we have nothing to prove.
Now let $i= 2^r p >0$ where $r \in \bN$ and $p$ is an odd integer
and assume that the result holds for $i-1$.
Let $x_1, \dots, x_i$ be fixed.
By induction assumption, for all $(L,a) \in \Aset[i-1]$ the value $\bsalphas{L}{a}$ is determined.
Thus it remains to show for $(L,a) \in \Aset \setminus \Aset[i-1]$.
Since $x_i$ is fixed, $\bsalphas{0}{2^{r}p} = x_i$ is determined.
Further, by induction assumption,
for all $0 \leq j < r$ we have $(j, 2^{r-j}p-1) \in \Aset[2^r p -2^{j}] \subset \Aset[i-1]$
and thus $\bsalphas{j}{2^{r-j}p-1}$ is determined.
By using these results and applying \eqref{eq:alpha-update} repeatedly,
$\bsalphas{j}{2^{r-j}p}$ is sequentially determined for all $0 \leq j \leq r$.
This proves the result for $i$.
\end{proof}

We remark that the lemma is directly shown as follows:
The condition that $x_1, \dots, x_i$ are fixed implies 
that $\bsxs{L}{a}$ is fixed for all $(L,a) \in \Aset$ and thus
$\bsalphas{L}{a} = A_L \bsxs{L}{a}$ is determined.
The procedure shown in the proof, however, can save the cost to compute the values
similarly to the fast-Fourier transform algorithm.

\begin{lemma}\label{lem:determine-beta}
Let $i \in \bN$, $0 \leq i < 2^n$.
Let $x_1, \dots, x_i$ be fixed.
If $(L,a) \in \Bset$ holds, then $\bsbetas{L}{a}$ and $\bsgammas{L}{a}$ are determined.
\end{lemma}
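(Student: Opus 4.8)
The plan is to prove the lemma by induction on $i$, in close analogy with the proof of Lemma~\ref{lem:determine-alpha}, using the recursions \eqref{eq:beta-left}--\eqref{eq:gamma-right} to propagate ``determinedness'' of the pairs $\bsbetas{L}{a}, \bsgammas{L}{a}$. For the base case $i=0$, the set $\Bset[0]$ consists precisely of the pairs $(L,1)$ with $0 \le L \le n$; since $\bsbetas{n}{1} = \bsb$ and $\bsgammas{n}{1} = \bsc$ are prescribed, a downward induction on $L$ using \eqref{eq:beta-left} and \eqref{eq:gamma-left} with $a = 1$, namely $\bsbetas{L-1}{1} = \ave{L-1}{\bsbetas{L}{1}}$ and $\bsgammas{L-1}{1} = \ave{L-1}{\bsgammas{L}{1}}$, shows that $\bsbetas{L}{1}$ and $\bsgammas{L}{1}$ are determined for every $0 \le L \le n$.

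For the inductive step, I would write $i = 2^r p$ with $p$ odd, assume the claim for $i-1$, and fix $x_1, \dots, x_i$. By the induction hypothesis all $\bsbetas{L}{a}, \bsgammas{L}{a}$ with $(L,a) \in \Bset[i-1]$ are already determined, so it remains to handle the new pairs $(j, 2^{r-j}p+1)$ for $0 \le j \le r$. First I would treat $j = r$: since $p$ is odd, $2^{r-r}p + 1 = p+1$ is even, so $(r, p+1)$ plays the role of a ``right half'', and \eqref{eq:beta-right}, \eqref{eq:gamma-right} express $\bsbetas{r}{p+1}$ and $\bsgammas{r}{p+1}$ in terms of $\bsalphas{r}{p}$, $\bsbetas{r+1}{(p+1)/2}$ and $\bsgammas{r+1}{(p+1)/2}$. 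Here $\bsalphas{r}{p}$ is determined by Lemma~\ref{lem:determine-alpha}, since $(r,p) \in \Aset[i]$ (take $j = r$ in the definition of $\Aset[i]$, using $i = 2^r p$); and the induction hypothesis supplies $\bsbetas{r+1}{(p+1)/2}$ and $\bsgammas{r+1}{(p+1)/2}$ once I verify that $(r+1, (p+1)/2) \in \Bset[i-1]$. Granting this, the remaining pairs $(j, 2^{r-j}p+1)$ with $j < r$ are ``left halves'' (because $2^{r-j}p$ is then even), so $\bsbetas{j}{2^{r-j}p+1} = \ave{j}{\bsbetas{j+1}{2^{r-(j+1)}p+1}}$ and likewise for $\bsgammas{}{}$ by \eqref{eq:beta-left}, \eqref{eq:gamma-left}, and a downward induction on $j$ starting from $j = r$ completes the step.

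The step I expect to require the most care is the verification that $(r+1, (p+1)/2) \in \Bset[i-1]$, i.e.\ the identification of the correct earlier index at which this pair enters the family $\Bset[\cdot]$. If $p = 1$ the pair is $(r+1, 1)$, which lies in $\Bset[0]$ because $i = 2^r p < 2^n$ forces $r < n$, hence $r+1 \le n$. If $p > 1$, I would write $p - 1 = 2^s q$ with $q$ odd and $s \ge 1$, so that $2^r(p-1) = 2^{r+s} q$ and $(p+1)/2 = (p-1)/2 + 1 = 2^{(r+s)-(r+1)} q + 1$ with $0 \le r+1 \le r+s$; thus $(r+1, (p+1)/2)$ is exactly one of the pairs adjoined at index $2^{r+s} q = 2^r(p-1)$, so $(r+1,(p+1)/2) \in \Bset[2^r(p-1)]$. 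Since $2^r(p-1) = 2^r p - 2^r \le 2^r p - 1 = i-1$, monotonicity of the sets $\Bset[\cdot]$ in the index gives $\Bset[2^r(p-1)] \subseteq \Bset[i-1]$ and hence the claim. Everything else is routine bookkeeping along the recursions \eqref{eq:beta-left}--\eqref{eq:gamma-right}, exactly as in Lemma~\ref{lem:determine-alpha}.
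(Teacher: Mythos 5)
Your proof is correct and follows essentially the same route as the paper's: induction on $i$, with the base case handled by propagating $\bsbeta_{n,1}=\bsb$, $\bsgamma_{n,1}=\bsc$ down via \eqref{eq:beta-left}--\eqref{eq:gamma-left}, and the inductive step handling $(r,p+1)$ first via \eqref{eq:beta-right}--\eqref{eq:gamma-right} and Lemma~\ref{lem:determine-alpha}, then descending to $j<r$. Your explicit verification that $(r+1,(p+1)/2)\in\Bcal_{2^r(p-1)}\subseteq\Bcal_{i-1}$ is a detail the paper merely asserts, and it checks out.
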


\begin{proof}
We prove the lemma by induction on $i$.
First assume $i=0$, i.e., none of $x_j$ are fixed for $1 \leq j \leq 2^n$.
Even then, $\bsbetas{n}{1}$ and $\bsgammas{n}{1}$ are determined
as $\bsbetas{n}{1} = \bsb$ and $\bsgammas{n}{1} = \bsc$.
Hence, using \eqref{eq:beta-left} and \eqref{eq:gamma-left} repeatedly,
$\bsbetas{j}{1}$ and $\bsgammas{j}{1}$ are determined for all $0 \leq j \leq r$.
This proves the result for $i=0$.

Now we assume that the lemma holds for $i-1$.
Let $x_1, \dots, x_i$ be fixed.
By induction assumption,
$\bsbetas{L}{a}$ and $\bsgammas{L}{a}$ are determined for all $(L,a) \in \Bset[i-1]$.
Thus it remains to show for $(L,a) \in \Bset \setminus \Bset[i-1]$.
Lemma~\ref{lem:determine-alpha} implies that $\bsalphas{r}{p}$ is determined.
Further, by induction assumption we have $(r+1, (p+1)/2) \in \Bset[2^r(p-1)] \subset \Bset[i-1]$
and thus $\bsbetas{r+1}{(p+1)/2}$ and $\bsgammas{r+1}{(p+1)/2}$ are determined.
Then $\bsbetas{r}{p+1}$ and $\bsgammas{r}{p+1}$ are determined
from these results, \eqref{eq:beta-right} and \eqref{eq:gamma-right}.
Thus, by using \eqref{eq:beta-left} and \eqref{eq:gamma-left} repeatedly,
$\bsbetas{j}{2^{r-j}p+1}$ and $\bsgammas{j}{2^{r-j}p+1}$ are determined for all $0 \leq j \leq r$.
This proves the result for $i$.
\end{proof}
 
Lemma~\ref{lem:determine-beta} implies that $(0, i+1) \in \Bset[i]$,
which means that $\bsbetas{0}{i+1}$ and $\bsgammas{0}{i+1}$ are determined
when $x_1, \dots, x_i$ are fixed.
Thus we have shown the following equivalence in summary.

\begin{theorem}\label{thm:tail-recursive-domain}
The inequality $\bsb \leq A_n \bsx \leq \bsc$ is equivalent to
$2^n$ simultaneous inequalities
\[
\bsbetas{0}{i} \leq x_i \leq \bsgammas{0}{i} \quad \text{for $1 \leq i \leq 2^n$},
\]
where
$\bsbetas{0}{1}$ and $\bsgammas{0}{1}$ are already determined
and $\bsbetas{0}{i}$ and $\bsgammas{0}{i}$ are determined
when $x_1, \dots, x_{i-1}$ are fixed, as in Lemmas~\ref{lem:determine-alpha} and \ref{lem:determine-beta}.
\end{theorem}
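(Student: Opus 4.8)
The plan is to separate the statement into two independent parts. The first is a purely logical equivalence, valid for each fixed $\bsx\in\bR^{2^n}$: that $\bsb\le A_n\bsx\le\bsc$ holds if and only if all $2^n$ scalar inequalities $\bsbetas{0}{i}\le x_i\le\bsgammas{0}{i}$ hold. The second is the dependency assertion: that $\bsbetas{0}{i}$ and $\bsgammas{0}{i}$ are functions of $x_1,\dots,x_{i-1}$ alone, and $\bsbetas{0}{1},\bsgammas{0}{1}$ of the data $\bsb,\bsc$ alone. The second part is essentially Lemmas~\ref{lem:determine-alpha} and~\ref{lem:determine-beta}; the first is obtained by unwinding Lemma~\ref{lem:recursive-domain} down the binary tree whose nodes are the index pairs $(L,a)$ with $0\le L\le n$ and $1\le a\le 2^{n-L}$.

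For the equivalence I would induct downward on $L$ (equivalently, on $n-L$), establishing the following for every $0\le L\le n$ and $1\le a\le 2^{n-L}$: with the intermediate bounds $\bsbetas{L}{a},\bsgammas{L}{a}$ generated by \eqref{eq:beta-left}--\eqref{eq:gamma-right} starting from $\bsbetas{n}{1}=\bsb$, $\bsgammas{n}{1}=\bsc$, the inequality $\bsbetas{L}{a}\le A_L\bsxs{L}{a}\le\bsgammas{L}{a}$ is equivalent to the conjunction of $\bsbetas{0}{j}\le x_j\le\bsgammas{0}{j}$ over the contiguous block $j\in\{(a-1)2^L+1,\dots,a2^L\}$. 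The base case $L=0$ is trivial. For the inductive step, apply Lemma~\ref{lem:recursive-domain} with the substitution $n\mapsto L-1$, $\bsb\mapsto\bsbetas{L}{a}$, $\bsc\mapsto\bsgammas{L}{a}$, and $\bsx=(\bsxs{L-1}{2a-1};\bsxs{L-1}{2a})=\bsxs{L}{a}$; since $A_L$ has the block form \eqref{eq:definition-A} in terms of $A_{L-1}$ and $D_{L-1}$, and $A_{L-1}\bsxs{L-1}{2a-1}=\bsalphas{L-1}{2a-1}$, the lemma rewrites the level-$L$ inequality as the conjunction of the two level-$(L-1)$ inequalities indexed by $2a-1$ and $2a$, whose bounds are precisely those produced by \eqref{eq:beta-left}--\eqref{eq:gamma-right}. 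Applying the inductive hypothesis to the two children and noting that their leaf blocks $\{(a-1)2^L+1,\dots,(a-1)2^L+2^{L-1}\}$ and $\{(a-1)2^L+2^{L-1}+1,\dots,a2^L\}$ concatenate to the block of $(L,a)$ completes the step. Taking $(L,a)=(n,1)$ gives the desired equivalence.

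For the dependency part, I would check that for every $i$ with $1\le i\le 2^n$ one has $(0,i)\in\Bset[i-1]$: for $i=1$ this is immediate from $\Bset[0]=\{(j,1)\mid 0\le j\le r\}$, while for $i\ge 2$ it follows by taking $j=0$ in the family $\{(j,2^{r-j}p+1)\mid 0\le j\le r\}$ adjoined to $\Bset[i-2]$ to form $\Bset[i-1]$, where $i-1=2^r p$ with $p$ odd. Then Lemma~\ref{lem:determine-beta}, applied with its index equal to $i-1\in\{0,\dots,2^n-1\}$, yields that $\bsbetas{0}{i}$ and $\bsgammas{0}{i}$ are determined once $x_1,\dots,x_{i-1}$ are fixed (in particular $\bsbetas{0}{1},\bsgammas{0}{1}$ outright, for $i=1$). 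Combined with the equivalence established above, this is precisely the theorem.

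I do not expect a genuine obstacle — the three preceding lemmas carry the load — but the step requiring care is the index bookkeeping in the downward induction: verifying that the leaves dominated by node $(L,a)$ are exactly the block $\{(a-1)2^L+1,\dots,a2^L\}$, so that the two children's blocks concatenate without gap or overlap, and that the four bounds produced by Lemma~\ref{lem:recursive-domain} at node $(L,a)$ literally coincide with the recursively defined $\bsbetas{L-1}{2a-1},\bsgammas{L-1}{2a-1},\bsbetas{L-1}{2a},\bsgammas{L-1}{2a}$ from \eqref{eq:beta-left}--\eqref{eq:gamma-right}. Once these identifications are spelled out, the proof is a routine induction.
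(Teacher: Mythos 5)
Your proposal is correct and follows essentially the same route as the paper, which presents this theorem as a summary of the preceding discussion: the repeated application of Lemma~\ref{lem:recursive-domain} down the binary tree via \eqref{eq:beta-left}--\eqref{eq:gamma-right}, together with the observation that $(0,i)\in\Bset[{i-1}]$ so that Lemma~\ref{lem:determine-beta} supplies the dependency ordering. Your write-up merely makes the induction on $L$ and the block bookkeeping explicit, which the paper leaves implicit; the only quibble is the phrase ``induct downward on $L$'' for what is an upward induction with base case $L=0$.
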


Lemmas~\ref{lem:determine-alpha}--\ref{lem:determine-beta} and Theorem~\ref{thm:tail-recursive-domain} allow
Algorithm~\ref{alg:tail-recursive}, an tail-recursive enumeration
of all the Chebyshev-Frolov lattice points $A_n\bsk$ with $\bsk \in \bZ^{2^n}$ in the box $[\bsb,\bsc]$.
Algorithm~\ref{alg:tail-recursive} is equivalent to $2^n$-folded for-loops.

\begin{remark}
If you need only to approximate the integration value,
replace Line~22 in Algorithm~\ref{alg:tail-recursive} by the evaluation of the integrand.
You do not need to memorize any of the Chebyshev-Frolov lattice points.
\end{remark}

\begin{algorithm}
\caption{Enumerate the lattice points in the box $[\bsb,\bsc]$}\label{alg:tail-recursive}
\begin{algorithmic}[1]
\Procedure{LatticePoints}{$n, \bsb, \bsc$} \Comment{The lattice points in the box}
\For {$i = 1$ to $2^n$} \Comment{Preparation for updating}
\State memory $r(i), p(i) \in \bN$ as $i=2^{r(i)} p(i)$
\EndFor \Comment{Finish preparation}
\State $\bsbetas{n}{1} \gets \bsb$  \Comment{Update $\bsbetas{L}{a}$ and $\bsgammas{L}{a}$ with $\Bset[0]$}
\State $\bsgammas{n}{1} \gets \bsc$
\For {$j = n-1$ to $0$}
\State $\bsbetas{j}{1} \gets \ave{j}{\bsbetas{j+1}{1}}$
\State $\bsgammas{j}{1} \gets \ave{j}{\bsgammas{j+1}{1}}$
\EndFor \Comment{Finish updating}
\State $\Call{Enum}{1}$
\EndProcedure
\State
\Function{Enum}{$i$} \Comment{Enumerate the $i$-th coordinate $k_i$}
\For{$k_i = \lceil \bsbetas{0}{i} \rceil$ to $\lfloor \bsgammas{0}{i} \rfloor$}
\If{$i \neq 2^n$}
\State \Call{UpdateAlpha}{$i$}
\State \Call{UpdateBetaGamma}{$i$}
\State $\Call{Enum}{i+1}$
\Else \Comment{That is, if $i=2^n$}
\State \Call{UpdateAlpha}{$2^n$}
\State Output $\bsalphas{n}{1}$  \Comment{$\bsalphas{n}{1}=A_n \bsk$ is a lattice point}
\EndIf
\EndFor
\EndFunction
\State
\Function{UpdateAlpha}{$i$} \Comment{Update $\bsalphas{L}{a}$ with $\Aset$}
\State  $\bsalphas{0}{i} \gets k_i$
\For {$j = 1$ to $r(i)$}
\State $\bsalphas{j}{2^{r(i)-j}p(i)} \gets \UPa{j}{\bsalphas{j-1}{2^{r(i)-j+1}p(i)-1}}{\bsalphas{j-1}{2^{r(i)-j+1}p(i)}}$
\EndFor
\EndFunction
\State
\Function{UpdateBetaGamma}{$i$} \Comment{Update $\bsbetas{L}{a}$ and $\bsgammas{L}{a}$ with $\Bset$}
\State $\bsbetas{r(i)}{p(i)+1} \gets \Rb{r(i)}{\bsalphas{r(i)}{p(i)}}{\bsbetas{r(i)+1}{(p(i)+1)/2}}{\bsgammas{r(i)+1}{(p(i)+1)/2}}$
\State $\bsgammas{r(i)}{p(i)+1} \gets \Rc{r(i)}{\bsalphas{r(i)}{p(i)}}{\bsbetas{r(i)+1}{(p(i)+1)/2}}{\bsgammas{r(i)+1}{(p(i)+1)/2}}$
\For {$j = r(i)-1$ to $0$} 
\State $\bsbetas{j}{2^{r(i)-j}p(i)+1} \gets \ave{j}{\bsbetas{j+1}{2^{r(i)-j-1}p(i)+1}}$
\State $\bsgammas{j}{2^{r(i)-j}p(i)+1} \gets \ave{j}{\bsgammas{j+1}{2^{r(i)-j-1}p(i)+1}}$
\EndFor
\EndFunction
\end{algorithmic}
\end{algorithm}

\section{Frolov's cubature formula and its randomization}\label{sec:Frolov}
In this section we revisit Frolov's cubature formula and its randomization,
and in particular we show how to enumerate the integration nodes using Algorithm~\ref{alg:tail-recursive}.

Let $\bsv \in \bR^d$ and a matrix $T \in \bR^{d \times d}$ which generates an admissible lattice $T(\bZ^d)$.
We define the set
\[
X(T,\bsv) := \{ T(\bsk + \bsv) \mid \bsk \in \bZ^d\} \cap [-1/2, 1/2]^d
\]
and the cubature rule for a function $f(\bsx)$ on $[-1/2, 1/2]^d$ as
\[
Q_{T,\bsv}(f)
= \abs{\det{T}} \sum_{x \in X(T,v)} f(\bsx).
\]
As mentioned in the introduction,
Frolov's cubature formula is of the form $Q(a^{-1}T,\bszero)$ for $a > 1$.
For the number of integration nodes, it is known from \cite{Skriganov1994cud} that
\[
\lim_{a \to \infty} \frac{|X(a^{-1}T,\bszero)|}{\det(a^{-1}T)} =1.
\]

Following \cite{Kacwin2016ocf},
we use scaled (and coordinate-permuted) Chebyshev-Frolov lattices as admissible lattices for Frolov's cubature formula.
Let $n \in \bN$ and let $A_n$ be defined as in \eqref{eq:definition-A}.
For a scaling parameter $N \in \bR$ with $N>0$, we define the value $s(N):= (\abs{\det(A_n)} N)^{-1/d}$
and the matrix
\[
A_{n,N} := s(N) A_n,
\]
which satisfies $\abs{\det(A_{n,N})} = 1/N$.
From Theorem~\ref{thm:Kacwin}, we have $\abs{\det(A_n)} = (2d)^{d/2}/\sqrt{2}$.
We consider Frolov's cubature formula as $Q(A_{n,N},\bszero)$ for $N \in \bN$.
To find the integration nodes, we can use Algorithm~\ref{alg:tail-recursive} and the bijection
\[
\{A_n \bsk \mid \bsk \in \bZ^d\} \cap [\bsb, \bsc] \to X(A_{n,N}, \bszero),
\quad \bsx \mapsto s(N) \bsx,
\]
where $\bsb := -s(N)^{-1}(1/2, \dots, 1/2)^\top$ and $\bsc := -\bsb = s(N)^{-1}(1/2, \dots, 1/2)^\top$.

Randomized Frolov's cubature formula was introduced by Krieg and Novak \cite{Krieg2015uam},
and studied further by Ullrich \cite{Ullrich2016mcm}.
Our algorithm introduced below follows the exposition in \cite{Ullrich2016mcm},
but note that $A_{n,N}$ in this paper corresponds to $B_N^{- \top}$ in \cite{Ullrich2016mcm}.
Let $\bsu$ and $\bsv$ be two independent random vectors
that are uniformly distributed in $[1/2, 3/2]^d$ and $[0,1]^d$, respectively.
Let $U := \diag(\bsu)$.
We define randomized Frolov's cubature formula $M_N$ using $A_{n,N}$ as
\[
M_N(f) := Q_{U^{-1} A_{n,N}, \bsv}(f).
\]
How can we enumerate the nodes of the formula $M_N(f)$?
We have 
\begin{align*}
\bsx \in X(U^{-1} A_{n,N}, \bsv)
&\iff \bsx = U^{-1} A_{n,N} (\bsk + \bsv) \in [-1/2,1/2]^d\\
&\iff A_{n} \bsk \in s(N)^{-1}U[-1/2,1/2]^d - A_{n} \bsv.
\end{align*}
Hence, with defining $\bsh := (1/2, \dots, 1/2)^\top \in \bR^d$,
$\bsb = -s(N)^{-1}U \bsh -A_{n} \bsv$
and $\bsc = s(N)^{-1}U \bsh - A_{n} \bsv$,
we have the following bijective map
\[
\{A_n \bsk \mid \bsk \in \bZ^d\} \cap [\bsb, \bsc] \to X(U^{-1} A_{n,N}, \bsv),
\quad \bsx \mapsto s(N) U^{-1}(\bsx + A_{n} \bsv).
\]
Thus we can use Algorithm~\ref{alg:tail-recursive}
to enumerate the nodes of randomized Frolov's cubature formula.
We remark that the vector $A_{n} \bsv$ can be quickly computed,
similarly to the computation of $\bsalphas{n}{1}$.

\section{Numerical efficiency of the algorithm}\label{sec:experiments}
In this section we show the efficiency of Algorithm~\ref{alg:tail-recursive}.
Based on Algorithm~\ref{alg:tail-recursive},
we counted the number of the nodes of Frolov's cubature formula using the Chebyshev-Frolov lattices,
for dimensions $d=2,4,8,16,32$ and for the scaling parameter $N=2^m$ with $m=1, \dots, 30$.
More precisely, we replaced Line~22 in Algorithm~\ref{alg:tail-recursive} by incrementing a counter for the number of the nodes.
The code we used can be found at \url{https://github.com/tttyoyoyttt/the_Chebyshev_Frolov_lattice_points}.
We conducted the experiments on Intel Core i7-4790 3.60GHz CPU\@.
Our codes are implemented in C and compiled by GCC 4.9.3 with -O2 optimization flag on Windows 7.
We used windows.h for getting the execution time.

The result is summarized in Tables~\ref{table:d248}--\ref{table:d1632},
which show the number of the nodes and the execution time.
We can enumerate the nodes for $d=16$, $N=2^{20}$ in less than 1 second.
We can see that, for a fixed dimension $d$,
the execution time increases linearly with respect to the scaling parameter $N$.
On the other hand, for a fixed $N$, 
the execution time increases rapidly with respect to $d$.
We can also see that the scaling parameter $N$ does not well approximate the number of nodes
when $d=32$, for $N \leq 2^{30}$.
Hence we suggest to use the formula when $d \leq 16$.

We have to remark on the accuracy of Algorithm~\ref{alg:tail-recursive}.
It requires many floating-point arithmetic operations,
so it might have some errors.
Indeed, if we use single-precision instead of double-precision, 
the number of the enumerated points sometimes differs (for example, for $d=4$ and $N=2^{24}$).
In order to check the accuracy of Algorithm~\ref{alg:tail-recursive},
we conducted another experiment.
We first enumerated all the points in the box with the scaling parameter $2N$.
Then, for each point, we checked whether it is included in  the box with the scaling parameter $N$.
We confirm that the number of the enumerated points by this algorithm coincides with
that as in Tables~\ref{table:d248}--\ref{table:d1632}.
We also confirm that they coincide with the result in \cite[Appendix]{Nguyen2015cvs},
which gives those for $d=2,4,8,16$ and $N=4^m$ with $3 \leq m \leq 10$.
We remark that if we miscounted the integration nodes by the numerical error 
it would not be critical for Frolov's cubature formula
since such miscounted points would be very close to the edge of the box
and thus their function evaluations would be close to zero.

\begin{table}
\centering
\caption{The number of the nodes of Chebyshev-Frolov's cubature formula and the execution time
for $N=2^m$ with $m=1. \dots, 30$ and $d=2,4,8$ are given. We denote $\log_2 N$ by $\mathrm{lb} {N}$.}
\label{table:d248}
	\begin{tabular}{| l | l | l | l | l | l | l |}
	\hline&
	\multicolumn{2}{|c|}{$d=2$}&\multicolumn{2}{|c|}{$d=4$}&\multicolumn{2}{|c|}{$d=8$}\\\hline
	 $\mathrm{lb} {N}$ & nodes & time(sec) & nodes & time(sec) & nodes & time(sec) \\ \hline
	 $1$ & $3$ & $0.000008$& $5$ & $0.000012$& $19$ & $0.000019$\\\hline
	 $2$ & $5$ & $0.000001$& $5$ & $0.000001$& $19$ & $0.000005$\\\hline
	 $3$ & $7$ & $0.000001$& $11$ & $0.000002$& $23$ & $0.000007$\\\hline
	 $4$ & $15$ & $0.000001$& $15$ & $0.000002$& $27$ & $0.000008$\\\hline
	 $5$ & $31$ & $0.000001$& $31$ & $0.000003$& $45$ & $0.000014$\\\hline
	 $6$ & $65$ & $0.000001$& $71$ & $0.000004$& $79$ & $0.000021$\\\hline
	 $7$ & $131$ & $0.000001$& $123$ & $0.000006$& $167$ & $0.000034$\\\hline
	 $8$ & $257$ & $0.000002$& $261$ & $0.000009$& $271$ & $0.000053$\\\hline
	 $9$ & $513$ & $0.000005$& $513$ & $0.000016$& $529$ & $0.000092$\\\hline
	 $10$ & $1027$ & $0.000007$& $1025$ & $0.000028$& $1067$ & $0.000159$\\\hline
	 $11$ & $2049$ & $0.000012$& $2049$ & $0.000048$& $2107$ & $0.000282$\\\hline
	 $12$ & $4095$ & $0.000022$& $4099$ & $0.000083$& $4113$ & $0.000501$\\\hline
	 $13$ & $8191$ & $0.000042$& $8201$ & $0.000149$& $8283$ & $0.000891$\\\hline
	 $14$ & $16383$ & $0.000082$& $16385$ & $0.000272$& $16413$ & $0.001580$\\\hline
	 $15$ & $32767$ & $0.000160$& $32775$ & $0.000492$& $32823$ & $0.002782$\\\hline
	 $16$ & $65539$ & $0.000314$& $65533$ & $0.000910$& $65645$ & $0.005053$\\\hline
	 $17$ & $131075$ & $0.000610$& $131095$ & $0.001682$& $131183$ & $0.009033$\\\hline
	 $18$ & $262145$ & $0.001205$& $262143$ & $0.003110$& $262263$ & $0.016504$\\\hline
	 $19$ & $524289$ & $0.002393$& $524281$ & $0.005838$& $524341$ & $0.030049$\\\hline
	 $20$ & $1048579$ & $0.005123$& $1048609$ & $0.011248$& $1048779$ & $0.055015$\\\hline
	 $21$ & $2097153$ & $0.009583$& $2097143$ & $0.021416$& $2097107$ & $0.100984$\\\hline
	 $22$ & $4194307$ & $0.018736$& $4194355$ & $0.041369$& $4194399$ & $0.186696$\\\hline
	 $23$ & $8388611$ & $0.036209$& $8388589$ & $0.080187$& $8388843$ & $0.347014$\\\hline
	 $24$ & $16777215$ & $0.072445$& $16777221$ & $0.156289$& $16777535$ & $0.642543$\\\hline
	 $25$ & $33554429$ & $0.144748$& $33554439$ & $0.307664$& $33554807$ & $1.201434$\\\hline
	 $26$ & $67108861$ & $0.288265$& $67108867$ & $0.599280$& $67108777$ & $2.250724$\\\hline
	 $27$ & $134217727$ & $0.577065$& $134217723$ & $1.178398$& $134217783$ & $4.233333$\\\hline
	 $28$ & $268435457$ & $1.151628$& $268435461$ & $2.325093$& $268435889$ & $8.002449$\\\hline
	 $29$ & $536870913$ & $2.322550$& $536870913$ & $4.598802$& $536871467$ & $15.178914$\\\hline
	 $30$ & $1073741827$ & $4.603653$& $1073741807$ & $9.105867$& $1073742019$ & $28.899540$\\\hline
	\end{tabular}
\end{table}

\begin{table}
\centering
\caption{The number of the nodes of Chebyshev-Frolov's cubature formula and the execution time
for $N=2^m$ with $m=1. \dots, 30$ and $d=16,32$ are given. We denote $\log_2 N$ by $\mathrm{lb} {N}$.}
\label{table:d1632}
	\begin{tabular}{| l | l | l | l | l |}
	\hline&
	\multicolumn{2}{|c|}{$d=16$}&\multicolumn{2}{|c|}{$d=32$}\\\hline
	 $\mathrm{lb} {N}$ & nodes & time(sec) & nodes & time(sec) \\ \hline
	 $1$ & $77$ & $0.000085$& $3377$ & $0.019404$\\\hline
	 $2$ & $127$ & $0.000097$& $4105$ & $0.025259$\\\hline
	 $3$ & $151$ & $0.000120$& $5041$ & $0.034484$\\\hline
	 $4$ & $223$ & $0.000182$& $6371$ & $0.047148$\\\hline
	 $5$ & $295$ & $0.000260$& $8915$ & $0.068023$\\\hline
	 $6$ & $423$ & $0.000388$& $11867$ & $0.096804$\\\hline
	 $7$ & $539$ & $0.000569$& $15291$ & $0.141802$\\\hline
	 $8$ & $967$ & $0.000963$& $20651$ & $0.211015$\\\hline
	 $9$ & $1377$ & $0.001565$& $29215$ & $0.323648$\\\hline
	 $10$ & $2043$ & $0.002452$& $42323$ & $0.493032$\\\hline
	 $11$ & $3503$ & $0.004050$& $61997$ & $0.758875$\\\hline
	 $12$ & $5835$ & $0.007013$& $88645$ & $1.169238$\\\hline
	 $13$ & $10451$ & $0.011678$& $128269$ & $1.841059$\\\hline
	 $14$ & $18901$ & $0.020136$& $186749$ & $2.896117$\\\hline
	 $15$ & $36085$ & $0.034897$& $278961$ & $4.625915$\\\hline
	 $16$ & $69353$ & $0.060682$& $430037$ & $7.443627$\\\hline
	 $17$ & $136839$ & $0.107031$& $679287$ & $12.120136$\\\hline
	 $18$ & $267257$ & $0.188353$& $1102547$ & $20.047800$\\\hline
	 $19$ & $530333$ & $0.334023$& $1799443$ & $33.024893$\\\hline
	 $20$ & $1054837$ & $0.593132$& $2990409$ & $55.098279$\\\hline
	 $21$ & $2106165$ & $1.062962$& $5079585$ & $92.828770$\\\hline
	 $22$ & $4207997$ & $1.909237$& $8757305$ & $156.728010$\\\hline
	 $23$ & $8402385$ & $3.446778$& $15442557$ & $265.686566$\\\hline
	 $24$ & $16797845$ & $6.234337$& $27637841$ & $454.624777$\\\hline
	 $25$ & $33577467$ & $11.324212$& $50306689$ & $782.351191$\\\hline
	 $26$ & $67135425$ & $20.618313$& $92921093$ & $1351.141660$\\\hline
	 $27$ & $134246629$ & $37.640596$& $173897749$ & $2343.257467$\\\hline
	 $28$ & $268458047$ & $68.951316$& $328647641$ & $4087.063698$\\\hline
	 $29$ & $536891351$ & $126.640419$& $627372745$ & $7152.910717$\\\hline
	 $30$ & $1073829043$ & $233.271579$& $1208920345$ & $12553.435467$\\\hline
	\end{tabular}
\end{table}

\section*{Acknowledgments}
The authors are grateful to Christopher Kacwin, Mario Ullrich and Tino Ullrich for their valuable comments
about the theory and experiments for Frolov's cubature formula.

\bibliographystyle{plain}

\bibliography{Frolov}

\begin{thebibliography}{10}

\bibitem{Dubinin1991oqf}
V.~V. Dubinin.
\newblock Optimal quadrature formulas for classes of functions with a bounded
  mixed difference.
\newblock {\em Mat. Zametki}, 49(1):149--151, 1991.

\bibitem{Dubinin1997cfb}
V.~V. Dubinin.
\newblock Cubature formulas for {B}esov classes.
\newblock {\em Izv. Ross. Akad. Nauk Ser. Mat.}, 61(2):27--52, 1997.

\bibitem{Dung2016hca}
Dinh D{\~u}ng, Vladimir~N. Temlyakov, and Tino Ullrich.
\newblock Hyperbolic cross approximation, 2016.
\newblock arXiv preprint arXiv:1601.03978 [math.NA].

\bibitem{Frolov1976ube}
K.~K. Frolov.
\newblock Upper bounds for the errors of quadrature formulae on classes of
  functions.
\newblock {\em Dokl. Akad. Nauk SSSR}, 231(4):818--821, 1976.

\bibitem{Kacwin2016rfc}
Christopher Kacwin.
\newblock Realization of the {F}rolov cubature formula via orthogonal
  {C}hebyshev-{F}rolov lattices.
\newblock Master's thesis, Mathematisch-Naturwissenschaftliche Fakult\"at der
  Rheinischen Friedrich-Wilhelms-Universit\"at Bonn, 2016.
\newblock
  \url{http://wissrech.ins.uni-bonn.de/teaching/master/masterthesis_kacwin.pdf}.

\bibitem{Kacwin2016ocf}
Christopher Kacwin, Jens Oettershagen, and Tino Ullrich.
\newblock On the orthogonality of the {C}hebyshev-{F}rolov lattice and
  applications, 2016.
\newblock arXiv preprint arXiv:1606.00492 [math.NA].

\bibitem{Krieg2015uam}
David Krieg and Erich Novak.
\newblock A {U}niversal {A}lgorithm for {M}ultivariate {I}ntegration.
\newblock Found. Comput. Math. (2016).
\newblock doi:10.1007/s10208-016-9307-y.

\bibitem{Nguyen2015cvs}
Van~Kien Nguyen, Mario Ullrich, and Tino Ullrich.
\newblock Change of variable in spaces of mixed smoothness and numerical
  integration of multivariate functions on the unit cube, 2015.
\newblock arXiv preprint arXiv:1511.02036 [math.NA].

\bibitem{Skriganov1994cud}
M.~M. Skriganov.
\newblock Constructions of uniform distributions in terms of geometry of
  numbers.
\newblock {\em Algebra i Analiz}, 6(3):200--230, 1994.

\bibitem{Temlyakov1993apf}
V.~N. Temlyakov.
\newblock {\em Approximation of periodic functions}.
\newblock Computational Mathematics and Analysis Series. Nova Science
  Publishers, Inc., Commack, NY, 1993.

\bibitem{Temlyakov2003cfd}
V.~N. Temlyakov.
\newblock Cubature formulas, discrepancy, and nonlinear approximation.
\newblock {\em J. Complexity}, 19(3):352--391, 2003.

\bibitem{Ullrich2016mcm}
Mario Ullrich.
\newblock A {M}onte {C}arlo method for integration of multivariate smooth
  functions {I}: {S}obolev spaces, 2016.
\newblock arXiv preprint arXiv:1604.06008 [math.NA].

\bibitem{Ullrich2016ueb}
Mario Ullrich.
\newblock On ``{U}pper error bounds for quadrature formulas on function
  classes'' by {K}. {K}. {F}rolov.
\newblock In Ronald Cools and Dirk Nuyens, editors, {\em Monte {C}arlo and
  quasi-{M}onte {C}arlo methods 2014}, volume 163 of {\em Springer Proc. Math.
  Stat.}, pages 571--582. Springer, Heidelberg, 2016.

\bibitem{Ullrich2016rfc}
Mario Ullrich and Tino Ullrich.
\newblock The role of {F}rolov's cubature formula for functions with bounded
  mixed derivative.
\newblock {\em SIAM J. Numer. Anal.}, 54(2):969--993, 2016.

\end{thebibliography}

\end{document}